\documentclass[11pt,reqno]{amsart}

\usepackage[a4paper,margin=1.08in]{geometry}
\usepackage[T1]{fontenc}
\usepackage[utf8]{inputenc}
\usepackage{lmodern}
\usepackage{microtype}
\usepackage{amsmath,amssymb,amsthm,mathtools}
\usepackage{enumitem}
\usepackage{aliascnt}
\usepackage[hidelinks]{hyperref}
\usepackage[nameinlink,capitalise,noabbrev]{cleveref}
\usepackage{parskip}
\usepackage{tikz}

\newaliascnt{proposition}{theorem}
\newtheorem{proposition}[proposition]{Proposition}
\aliascntresetthe{proposition}
\crefname{proposition}{proposition}{propositions}
\Crefname{proposition}{Proposition}{Propositions}

\newaliascnt{lemma}{theorem}
\newtheorem{lemma}[lemma]{Lemma}
\aliascntresetthe{lemma}
\crefname{lemma}{lemma}{lemmas}
\Crefname{lemma}{Lemma}{Lemmas}

\newaliascnt{corollary}{theorem}
\newtheorem{corollary}[corollary]{Corollary}
\aliascntresetthe{corollary}
\crefname{corollary}{corollary}{corollaries}
\Crefname{corollary}{Corollary}{Corollaries}

\newtheorem{maintheorem}{Theorem}
\crefname{maintheorem}{theorem}{theorems}
\Crefname{maintheorem}{Theorem}{Theorems}

\theoremstyle{definition}
\newaliascnt{definition}{theorem}
\newtheorem{definition}[definition]{Definition}
\aliascntresetthe{definition}
\crefname{definition}{definition}{definitions}
\Crefname{definition}{Definition}{Definitions}

\theoremstyle{remark}
\newaliascnt{remark}{theorem}
\newtheorem{remark}[remark]{Remark}
\aliascntresetthe{remark}
\crefname{remark}{remark}{remarks}
\Crefname{remark}{Remark}{Remarks}

\newcommand{\PP}{\mathbb P}
\newcommand{\OO}{\mathcal O}
\newcommand{\EE}{\mathcal E}

\newcommand{\II}{\mathcal I}
\newcommand{\Sym}{\operatorname{Sym}}
\newcommand{\Bs}{\operatorname{Bs}}

\newcommand{\Bl}{\operatorname{Bl}}

\newcommand{\wt}{\widetilde}

\title[Fujita freeness for projectivized toric vector bundles]
{Fujita freeness for projectivized toric vector bundles}

\author{Antonio Laface}
\date{August 2026}

\subjclass[2020]{14M25, 14J60, 14C20}
\keywords{toric vector bundle, projective bundle, Fujita freeness, Seshadri constant, global generation}

\begin{document}

\begin{abstract}
Let $X$ be a smooth projective toric variety of dimension $n\geq1$ over an algebraically closed field of characteristic zero, let $\EE$ be a toric vector bundle of rank $r\geq2$, and let $\pi\colon Y=\PP_X(\EE)\to X$ be the projective bundle of one-dimensional quotients.  Write an ample line bundle on $Y$ as $A=\OO_Y(a)\otimes\pi^*L$, with $a\geq1$.
We record a blow-up argument proving that $K_Y+mA$ is globally generated whenever an integer $m$ satisfies $ma\geq r$ and $m\delta(A)>n$, where $\delta(A)$ is a positive integer obtained from the degrees of $A$ on the invariant quotient sections over the torus-invariant curves of $X$.  In particular, $K_Y+mA$ is globally generated for $m\geq n+1$ and $ma\geq r$.
Consequently every projectivized toric vector bundle satisfies Fujita's freeness conjecture.  The uniform bound is sharp.  We also formulate the result as a global-generation theorem for adjoint symmetric powers of $\EE$ and explain its relation with the Seshadri-constant results of Hering--Musta\c{t}\u{a}--Payne and Fulger--Murayama.
ChatGPT (OpenAI) was used to assist with mathematical discussion, language, and bibliographic searches.

\end{abstract}

\maketitle

\section*{Introduction}

Let $Z$ be a smooth projective variety of dimension $d$ and let $H$ be an ample line bundle on $Z$.  Fujita's freeness conjecture predicts that $K_Z+mH$ is globally generated for every $m\geq d+1$.  The conjecture is known for toric varieties \cite{Mustata}.  Altmann--Ilten proved it for Gorenstein complexity-one $T$-varieties with rational singularities \cite{AltmannIlten}, which in particular covers projectivizations of rank-two toric vector bundles.  More recently, George--Manon established Fujita-type results for several classes of projectivized toric vector bundles \cite{GeorgeManon}.
The positivity results of Hering--Musta\c{t}\u{a}--Payne \cite{HMP}, combined with the adjoint jet-separation theorem of Fulger--Murayama \cite{FulgerMurayama}, also give the case of polarizations of the form $\OO_{\PP(\EE)}(1)\otimes\pi^*L$. For a general polarization $\OO_{\PP(\EE)}(a)\otimes\pi^*L$, the natural object on the base is the formal $\mathbb Q$-twist $\EE\langle L/a\rangle$ \cite[Section~6.2]{Lazarsfeld}.  The argument below works directly on the projective bundle and treats arbitrary $a$ and arbitrary rank.

Let $X$ be a smooth projective toric variety of dimension $n$, let $\EE$ be a toric vector bundle of rank $r\geq2$, and set $Y=\PP_X(\EE)$.  We use Grothendieck's quotient convention.  For an ample line bundle
\[
A=\OO_Y(a)\otimes\pi^*L,
\]
with $a\geq1$, we define a positive integer $\delta(A)$ from the degrees of $A$ on the invariant quotient sections over the torus-invariant curves of $X$.
Our first main result is the following refinement of the Fujita bound.

\begin{maintheorem}
\label{thm:main}
Let $X$ be a smooth projective toric variety of dimension $n\geq1$, let $\EE$ be a toric vector bundle of rank $r\geq2$, and let $Y=\PP_X(\EE)$.  Suppose that $A=\OO_Y(a)\otimes\pi^*L$ is ample.  If an integer $m$ satisfies $ma\geq r$ and $m\delta(A)>n$, then 
$K_Y+mA$
is globally generated.
\end{maintheorem}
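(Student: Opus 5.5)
The plan is to show that every point $y\in Y$ lies outside the base locus of $K_Y+mA$.

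\emph{Reduction to fixed points.} The line bundle $K_Y+mA$ carries a torus linearization, so its base locus is closed and $T$-invariant. Every nonempty closed $T$-invariant subset of $Y$ contains a $T$-fixed point, since it maps onto an invariant closed subset of $X$ and the torus acts on the fibres. Hence it suffices to treat torus-fixed points. A fixed point of $Y$ lies over a fixed point $x_\sigma$ of $X$, for a maximal cone $\sigma$, and corresponds to a $T$-invariant one-dimensional quotient of the fibre $\EE(x_\sigma)$. Over the affine chart $U_\sigma\cong\mathbb A^n$ the bundle splits equivariantly as $\bigoplus_i \OO(u_i)$, so $\pi^{-1}(U_\sigma)\cong\mathbb A^n\times\PP^{r-1}$, and the fixed point $y$ is the product of $0\in\mathbb A^n$ with a coordinate point of $\PP^{r-1}$.

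\emph{Blow-up argument.} Let $\beta\colon\wt Y=\Bl_y Y\to Y$, with exceptional divisor $E$, and set $N=n+r-1=\dim Y$. Then
\[
\beta^*(K_Y+mA)-E = K_{\wt Y}+\beta^*(mA)-NE .
\]
Since $\beta_*\OO_{\wt Y}(-E)=\II_y$, the section-level surjectivity $H^0(K_Y+mA)\to (K_Y+mA)\otimes k(y)$ follows from the vanishing $H^1\bigl(\wt Y,\beta^*(K_Y+mA)-E\bigr)=0$. To obtain this vanishing, I write
\[
\beta^*(K_Y+mA)-E=K_{\wt Y}+D,\qquad D=\beta^*(mA)-NE,
\]
and try to show that $D$ is nef and big. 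Kawamata--Viehweg vanishing then applies; we are in characteristic zero. Alternatively, I would arrange $D$ to be nef and big on a toric-like model and use toric vanishing. The key numerical input is the Seshadri-type statement that $\beta^*A-tE$ is nef for $t$ up to a quantity controlled separately in the fibre direction and in the base direction.

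\emph{Expected Seshadri-type bound.} In the fibre direction, $A$ restricts to $\OO(a)$ on $\PP^{r-1}$, which allows $t\le a$. In the base direction, the invariant curves through $y$ are the invariant quotient sections over the torus-invariant curves of $X$ through $x_\sigma$, and along such a curve $A$ has degree at least $\delta(A)$. So I would aim to prove that $\beta^*A-tE$ is nef whenever $t\le a$ in the fibre directions and $t\le\delta(A)$ in the base directions.

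\emph{Main obstacle.} A single blow-up weight $E$ cannot encode these two different slopes, so I would instead use a weighted blow-up. Concretely, I would blow up the fixed point with weights adapted to the product chart $\mathbb A^n\times\mathbb A^{r-1}$, or blow up in two stages: first the fibre $\pi^{-1}(x_\sigma)$, then the point inside the strict transform. The aim is that the discrepancies split into a contribution $n$, to be absorbed by $m\delta(A)>n$, and a contribution $r-1$, to be absorbed by $ma\ge r$, i.e.\ $ma>r-1$.

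Nefness of the resulting divisor would be checked on invariant curves. The blow-up of the toric-fibred variety remains a $T$-variety whose cone of curves is generated by invariant curves; this is the point I would need to justify, possibly via Hering--Musta\c{t}\u{a}--Payne-type arguments on the invariant sections. The invariant curves are of three kinds: lines in the fibres, invariant sections over the invariant curves of $X$, and curves in the exceptional divisors. Their intersection numbers reduce to the two inequalities in the theorem.

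Bigness is automatic: $D$ is nef with strictly positive top self-intersection, coming from the ampleness of $A$ and the strict inequalities.

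I expect the nefness verification on the blow-up to be the hardest step, in particular showing that the invariant curves suffice and that each invariant section meets the relevant centre with the correct multiplicity.
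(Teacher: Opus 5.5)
There is a genuine gap: the step you expect to be hardest cannot succeed under the hypotheses. You are trying to get generation at a fixed point $y$ from a single Kawamata--Viehweg vanishing for the point ideal $\mathfrak m_y$ on a blow-up centred at $y$. That forces the base directions and the fibre directions through $y$ to share one positivity budget.

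Take the weighted blow-up with weight $w$ on the $n$ base coordinates and $v$ on the $r-1$ fibre coordinates. Then $K_{\wt Y}=\beta^*K_Y+(nw+(r-1)v-1)E$. Let $\Sigma$ be an invariant section through $y$ of $A$-degree $d\geq\delta(A)$ and $\ell$ a fibre line through $y$; their strict transforms satisfy $E\cdot\tilde\Sigma=1/w$ and $E\cdot\tilde\ell=1/v$. Nefness of $D=\beta^*(mA)-(nw+(r-1)v)E$ on these two curves requires $md\geq n+(r-1)v/w$ and $ma\geq r-1+nw/v$. These are compatible only if $\frac{n}{md}+\frac{r-1}{ma}\leq1$.

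For $Y=\PP^n\times\PP^{r-1}$, $A=\OO(1,1)$ and $r=m=n+1$ (so $d=1$), this reads $\frac{2n}{n+1}\leq1$, which is false for $n\geq2$. Yet $K_Y+mA\simeq\OO(0,0)$ is trivially free.

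The two-stage variant does not help either. The fibre is the centre of the first blow-up, so it has no strict transform. Suppose you then blow up $\rho^{-1}(y)\cong\PP^{n-1}\subset G$ to obtain $\mathfrak m_y$, and write $\hat\rho\colon\hat Y\to Y$ for the composite. The strict transform $\hat\Sigma$ meets the new exceptional divisor $E_2$ once and the strict transform $G'$ of $G$ not at all. So for every twist $-\alpha G'-\gamma E_2$ with $\gamma\geq1$, the divisor $M$ defined by $\hat\rho^*D_m-\alpha G'-\gamma E_2=K_{\hat Y}+M$ satisfies $M\cdot\hat\Sigma=md-(n+r-2+\gamma)$. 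You would therefore need $md\geq n+r-1$. The splitting of discrepancies into an ``$n$'' part and an ``$r-1$'' part that you hope for does not occur inside one vanishing theorem.

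The missing idea is that the fibre directions need no vanishing at all. Stop after your first stage and blow up the whole fixed fibre $Y_x$, which has codimension $n$. Then $K_{\wt Y}=\rho^*K_Y+(n-1)G$ and $\rho^*D_m-G=K_{\wt Y}+(m\rho^*A-nG)$. Ampleness of $m\rho^*A-nG$ needs only $m\delta_x(A)>n$. Kodaira vanishing then gives $H^1(Y,D_m\otimes\II_{Y_x})=0$, i.e.\ surjectivity of $H^0(Y,D_m)\to H^0(Y_x,D_m|_{Y_x})$. Since $D_m|_{Y_x}\simeq\OO_{\PP^{r-1}}(ma-r)$, the condition $ma\geq r$ is used only for the elementary global generation of this line bundle on the fibre.

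This also removes your cone-of-curves worry. We have $\Bl_{Y_x}Y\simeq\PP_{\wt X}(p^*\EE)$ with $\wt X=\Bl_xX$ toric, so it is still a projectivized toric vector bundle. Ampleness of $\rho^*A-\lambda G$ for $0<\lambda<\delta_x(A)$ then follows from the Hering--Musta\c{t}\u{a}--Payne criterion for the $\mathbb Q$-twisted bundle $p^*\EE\langle(p^*L-\lambda F)/a\rangle$, checked on the three kinds of invariant curves of $\wt X$. No comparable criterion is available on $\Bl_yY$.

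Your reduction to torus-fixed points via invariance of the base locus is fine and is the same as in the paper.
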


Since $\delta(A)$ is a positive integer, \Cref{thm:main} immediately implies that $K_Y+mA$ is globally generated whenever $m\geq n+1$ and $ma\geq r$.  Equivalently, it is enough to take $m\geq\max\{n+1,\lceil r/a\rceil\}$.  In particular, since $\dim Y=n+r-1$, every projectivized toric vector bundle over a smooth projective toric variety satisfies Fujita's freeness conjecture; see \Cref{cor:uniform,cor:fujita}.  The uniform bound is sharp.
The proof is based on a blow-up of the fiber over a torus-fixed point.  The positivity criterion of Hering--Musta\c{t}\u{a}--Payne controls the subtraction of the exceptional divisor, while Kodaira vanishing gives surjectivity of restriction to the fixed fiber.  Global generation on that fiber is then elementary, and torus invariance of the base locus gives global generation on $Y$.
The same argument also yields a statement for adjoint symmetric powers on the base.

\begin{maintheorem}
\label{thm:vector-bundle}
Under the hypotheses of \Cref{thm:main}, let $m$ be an integer satisfying $ma\geq r$ and $m\delta(A)>n$.  Then
\[
\omega_X\otimes\det\EE\otimes\Sym^{ma-r}\EE\otimes L^{\otimes m}
\]
is globally generated.
\end{maintheorem}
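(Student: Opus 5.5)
Theorem~\ref{thm:vector-bundle} will be deduced from \Cref{thm:main} by pushing forward along $\pi$.

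First I compute $K_Y$ for $Y=\PP_X(\EE)$ in Grothendieck's quotient convention. The relative Euler sequence $0\to\Omega_{Y/X}\to\pi^*\EE\otimes\OO_Y(-1)\to\OO_Y\to0$ gives $\omega_{Y/X}\cong\OO_Y(-r)\otimes\pi^*\det\EE$. Hence
\[
K_Y+mA=\OO_Y(ma-r)\otimes\pi^*\bigl(\omega_X\otimes\det\EE\otimes L^{\otimes m}\bigr).
\]
Since $ma-r\geq0$, the projection formula and $\pi_*\OO_Y(k)=\Sym^k\EE$ for $k\geq0$ give
\[
\pi_*\OO_Y(K_Y+mA)\cong\omega_X\otimes\det\EE\otimes\Sym^{ma-r}\EE\otimes L^{\otimes m}=:\mathcal F.
\]
Moreover $H^0(Y,K_Y+mA)=H^0(X,\mathcal F)$, and for each $x\in X$ the restriction $\OO_Y(K_Y+mA)|_{\pi^{-1}(x)}\cong\OO_{\PP^{r-1}}(ma-r)$ has $H^0$ equal to the fibre $\mathcal F(x)=\Sym^{ma-r}\EE(x)$, after twisting by the one-dimensional fibre of the line bundle.

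Next, I show that global generation of $\mathcal F$ at $x$ is equivalent to surjectivity of $H^0(X,\mathcal F)\to\mathcal F(x)$, that is, of $H^0(Y,K_Y+mA)\to H^0(\pi^{-1}(x),\OO(ma-r))$. Global generation of $K_Y+mA$ on $Y$ alone is weaker, since it only says that the image of this map generates $\OO_{\PP^{r-1}}(ma-r)$. So I need the stronger fibrewise surjectivity. The introduction says the proof of \Cref{thm:main} establishes exactly this. The blow-up argument at a torus-fixed point $x_0$ combines the Hering--Musta\c{t}\u{a}--Payne positivity criterion with Kodaira vanishing to get surjectivity of restriction to the fixed fibre $\pi^{-1}(x_0)$. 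I would invoke that intermediate statement directly, for every torus-fixed point $x_0$ under the hypotheses $ma\geq r$ and $m\delta(A)>n$. It gives that $\mathcal F$ is generated by global sections at every torus-fixed point.

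Finally, I use torus invariance. $\mathcal F$ is a $T$-equivariant vector bundle, so the support of the cokernel of $H^0(X,\mathcal F)\otimes\OO_X\to\mathcal F$ is a closed $T$-invariant subset of $X$. If nonempty, its closure contains a $T$-orbit closure, and hence a torus-fixed point by Borel's fixed point theorem on the projective variety $X$. This contradicts the previous step, so $\mathcal F$ is globally generated.

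The main obstacle is the second step: ensuring that the proof of \Cref{thm:main} really yields surjectivity onto the full fibre cohomology $H^0(\pi^{-1}(x_0),\OO(ma-r))$, and not merely global generation on $Y$. If that proof is only stated as global generation, I would redo its vanishing step. Let $\beta\colon\wt Y\to Y$ be the blow-up along $F=\pi^{-1}(x_0)$, with exceptional divisor $E$ of codimension $n$. The relevant term is $H^1(\wt Y,\beta^*(K_Y+mA)-E)$. Write $\beta^*(K_Y+mA)-E=K_{\wt Y}+\bigl(\beta^*(mA)-nE\bigr)$. HMP, via $\delta(A)$, should show that $\beta^*A-\tfrac{\delta(A)}{1}E$ is nef, in the sense that $A$ has degree at least $\delta(A)$ on the relevant invariant curves. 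Then $m\delta(A)>n$ gives that $\beta^*(mA)-nE$ is nef and big, possibly after a small perturbation using ampleness of $A$. Kawamata--Viehweg then kills the $H^1$, and $\beta_*\OO_{\wt Y}(-E)=\II_F$ yields the surjection $H^0(Y,K_Y+mA)\to H^0(F,(K_Y+mA)|_F)$.
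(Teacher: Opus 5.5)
Your proposal is correct and follows the paper's proof essentially verbatim. You identify the evaluation map of $\pi_*\OO_Y(D_m)\simeq\omega_X\otimes\det\EE\otimes\Sym^{ma-r}\EE\otimes L^{\otimes m}$ at a fixed point $x$ with the restriction $H^0(Y,D_m)\to H^0(Y_x,D_m|_{Y_x})$. You then use the surjectivity from \Cref{prop:restriction}, rightly noting that base-point-freeness of $D_m$ alone would not suffice, and conclude by the torus fixed-point argument. Your fallback vanishing step (nefness of $\beta^*A-\delta(A)E$ plus Kawamata--Viehweg) is only a cosmetic variant of the paper's route, which uses \Cref{lem:blowup-ample} to get ampleness of $\rho^*A-\lambda G$ for $0<\lambda<\delta_x(A)$ and then applies Kodaira vanishing.
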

In particular, if $\EE$ is an ample toric vector bundle of rank $r$ on a smooth projective toric $n$-fold, then $\omega_X\otimes\det\EE\otimes\Sym^q\EE$ is globally generated whenever $(q+r)\delta(\EE)>n$, and hence whenever $q+r\geq n+1$; see \Cref{cor:ample-E}.

\subsection*{Acknowledgements}
This note grew out of a course given by Nathan Ilten at the Banff
International Research Station--Casa Matem\'atica Oaxaca workshop
\href{https://www.birs.ca/events/2026/5-day-workshops/26w5505}
{\emph{Symmetries, Invariants and Singularities in Algebraic Geometry
with a View towards Latin America}}.  The author thanks Nathan Ilten for
helpful discussions on the questions considered in this note.  The author
was partially supported by Proyecto FONDECYT Regular No.~1230287.

\section{Proof of Theorem 1}

\subsection{Invariant edge degrees}

Throughout the paper, the ground field is algebraically closed of characteristic zero.  We assume $n\geq1$; when $n=0$, the statement is the elementary computation on $Y\simeq\PP^{r-1}$.

Fix an ample line bundle $A=\OO_Y(a)\otimes\pi^*L$, with $a\geq1$.  Let $C\subset X$ be a torus-invariant irreducible curve.  Since $C\simeq\PP^1$, the restriction of $\EE$ splits as $\EE|_C\simeq\bigoplus_{j=1}^r\OO_{\PP^1}(b_{C,j})$.  Each quotient $\EE|_C\twoheadrightarrow\OO_{\PP^1}(b_{C,j})$ defines a section $s_{C,j}\colon C\to\PP_C(\EE|_C)\subseteq Y$.  Along this section the tautological quotient line bundle has degree $b_{C,j}$, and therefore
\begin{equation}
\label{eq:edge-degree}
 A\cdot s_{C,j}(C)=a b_{C,j}+L\cdot C.
\end{equation}
The left-hand side is a positive integer because $A$ is ample.

\begin{definition}
Let $x\in X$ be a torus-fixed point.  Define
\[
 \delta_x(A):=\min\bigl\{a b_{C,j}+L\cdot C\;\big|\; C\text{ invariant},\ x\in C,\ 1\leq j\leq r\bigr\}.
\]
We also put $\delta(A):=\min_{x\in X^T}\delta_x(A)$.  Thus $\delta_x(A)$ and $\delta(A)$ are positive integers.
\end{definition}
By the proof of \cite[Proposition~3.2]{HMP}, together with the $\mathbb Q$-twisted positivity criterion of \cite[Remark~3.1]{HMP}, the quantity $\delta_x(A)/a$ is the local Seshadri constant at $x$ of the ample $\mathbb Q$-twisted toric vector bundle $\EE\langle L/a\rangle$.  We will only need the elementary restriction calculation underlying that statement.

\subsection{Ampleness after blowing up a fixed fiber}

Fix a torus-fixed point $x\in X$, and let $p\colon\wt X=\Bl_xX\to X$ be the blow-up, with exceptional divisor $F$.  Since $\pi$ is flat, blow-up commutes with this base change.  Hence
\[
 \wt Y:=Y\times_X\wt X\simeq\PP_{\wt X}(p^*\EE)\simeq\Bl_{Y_x}Y.
\]
We denote the two projections by $\rho\colon\wt Y\to Y$ and $\wt\pi\colon\wt Y\to\wt X$, and write $G=\wt\pi^*F$ for the exceptional divisor of $\rho$.

When $n=1$, the blow-up of the Cartier divisor $x\subset X$ is the identity; in this case one reads the notation above as $\wt X=X$, $F=x$, $\wt Y=Y$, $\rho=\mathrm{id}_Y$, and $G=Y_x$.

\begin{lemma}[Fixed-fiber Seshadri estimate]
\label{lem:blowup-ample}
For every rational number $\lambda$ satisfying $0<\lambda<\delta_x(A)$, the $\mathbb Q$-divisor $\rho^*A-\lambda G$ is ample on $\wt Y$.
\end{lemma}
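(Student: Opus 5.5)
We plan to prove \Cref{lem:blowup-ample} with a toric-type positivity criterion on $\wt Y$, reducing ampleness to positivity on finitely many invariant curves. The blow-up $\wt X=\Bl_xX$ of a fixed point is again a smooth projective toric variety, and $p^*\EE$ is a toric vector bundle on it. Hence $\wt Y=\PP_{\wt X}(p^*\EE)$ is a projectivized toric vector bundle, and the $\mathbb Q$-divisor $\rho^*A-\lambda G$ equals $\OO_{\wt Y}(a)\otimes\wt\pi^*(p^*L-\lambda F)$.

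By the criterion of \cite{HMP} in its $\mathbb Q$-twisted form (\cite[Remark~3.1]{HMP}), this twisted bundle is ample if and only if its restriction to every torus-invariant curve $\wt C\subset\wt X$ is ample. Since every invariant curve is $\PP^1$, this amounts to a positive degree $a\,b_{\wt C,j}+(p^*L-\lambda F)\cdot\wt C>0$ for each splitting summand. So the proof reduces to listing the invariant curves of $\wt X$ and computing these degrees.

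There are two kinds of invariant curves on $\wt X$.

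\emph{Strict transforms.} Take an invariant curve $\wt C$ that is the strict transform of an invariant curve $C\subset X$. Then $p^*\EE|_{\wt C}\simeq\EE|_C$, with the same splitting type $b_{C,j}$. We have $p^*L\cdot\wt C=L\cdot C$, and $F\cdot\wt C$ equals $1$ if $x\in C$ and $0$ otherwise, because $C$ is smooth at $x$.
\begin{itemize}
\item If $x\notin C$, the degree is $ab_{C,j}+L\cdot C>0$ by ampleness of $A$.
\item If $x\in C$, the degree is $ab_{C,j}+L\cdot C-\lambda\ge\delta_x(A)-\lambda>0$.
\end{itemize}

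\emph{Curves in the exceptional divisor.} Take an invariant curve $\wt C\subset F\simeq\PP^{n-1}$, which is a line. Here $p^*\EE|_{\wt C}$ is trivial, so every $b=0$. Also $p^*L\cdot\wt C=0$ and $F\cdot\wt C=-1$, so the degree is $\lambda>0$.

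All the required degrees are therefore positive, which gives ampleness. The case $n=1$ needs no blow-up. There $G=Y_x$ is a fiber and $\rho^*A-\lambda G=\OO_Y(a)\otimes\pi^*(L-\lambda x)$. The only invariant curve is $X$ itself, with degrees $ab_{X,j}+L\cdot X-\lambda$, and these are positive by the definition of $\delta_x(A)$.

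The main obstacle is justifying the $\mathbb Q$-twisted HMP criterion on $\wt Y$. This means ampleness of $\OO(a)\otimes\wt\pi^*M$, with $M$ a $\mathbb Q$-divisor, should be detected on the invariant sections $s_{\wt C,j}$ over invariant curves.

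The first route is to clear denominators: multiply by a large integer $N$ with $N\lambda\in\mathbb Z$. Then pass to $\Sym$-powers or to a toric Bott-type cover, so that the divisor becomes an honest line bundle $\OO(Na)\otimes\wt\pi^*(N M)$. Its restrictions to the invariant sections are then checked as above.

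A more direct route is also available. On a projectivized toric vector bundle, the invariant curves of $\wt Y$ are the sections $s_{\wt C,j}$ together with invariant lines in the fixed fibers. The Mori cone of $\wt Y$ is generated by their classes, as a $T$-variety with finitely many fixed points in the toric bundle setting used in \cite[Proposition~3.2]{HMP}. On the fiber lines the degree is $a>0$, so Kleiman's criterion applies.

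I would follow the second route, citing the proof of \cite[Proposition~3.2]{HMP} for the statement that the cone of curves is spanned by these invariant curves.
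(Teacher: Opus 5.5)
Your proposal is correct and is essentially the paper's proof. Both write $\rho^*A-\lambda G$ as $a$ times the tautological class of the $\mathbb Q$-twisted toric bundle $p^*\EE\langle(p^*L-\lambda F)/a\rangle$ on $\wt X$, invoke the invariant-curve criterion of \cite[Theorem~2.1 and Remark~3.1]{HMP}, and run the same degree check on three kinds of curves: strict transforms missing $x$, strict transforms through $x$ (where $F\cdot\wt C=1$), and lines in $F$ (where $F\cdot\ell=-1$); the paper absorbs $n=1$ into its notational convention.

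The paper simply cites the $\mathbb Q$-twisted criterion, so no extra justification is needed. If you want one, prefer your first route, namely the relative Veronese embedding $\PP(p^*\EE)\hookrightarrow\PP(\Sym^{k}p^*\EE)$ plus the integral criterion. Your second route as written is inaccurate: $\wt Y$ is a complexity-$(r-1)$ $T$-variety that can have positive-dimensional fixed loci and infinitely many invariant curves. Its conclusion nonetheless holds: the cone of curves is spanned by the sections $s_{\wt C,j}$ and fiber lines, by the standard degeneration of curves to $T$-invariant $1$-cycles.
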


\begin{proof}
Consider on $\wt X$ the $\mathbb Q$-twisted toric vector bundle
\[
 \mathcal V_\lambda:=p^*\EE\left\langle\frac{p^*L-\lambda F}{a}\right\rangle.
\]
Its tautological $\mathbb Q$-divisor on $\PP_{\wt X}(p^*\EE)=\wt Y$ is
\[
 \OO_{\wt Y}(1)+\wt\pi^*\left(\frac{p^*L-\lambda F}{a}\right)=\frac1a(\rho^*A-\lambda G).
\]
Thus it is enough to prove that $\mathcal V_\lambda$ is ample.  By \cite[Theorem~2.1 and Remark~3.1]{HMP}, ampleness of a $\mathbb Q$-twisted toric vector bundle can be checked on the invariant curves of the base.

There are three kinds of torus-invariant curves on $\wt X$, and the verification is immediate in each case.  If the curve is the strict transform of an invariant curve $C$ not passing through $x$, then the degrees of the summands of $\mathcal V_\lambda$ are unchanged and equal to $(a b_{C,j}+L\cdot C)/a$, which is positive by \eqref{eq:edge-degree}.

If instead the curve is the strict transform $\wt C$ of an invariant curve $C$ passing through $x$, then $F\cdot\wt C=1$.  The corresponding summand degree is $(a b_{C,j}+L\cdot C-\lambda)/a$.  By the definition of $\delta_x(A)$ this is at least $(\delta_x(A)-\lambda)/a$, and is therefore positive.

Finally, let $\ell\subset F$ be an invariant curve contained in the exceptional divisor.  The restriction $p^*\EE|_\ell$ is trivial, while $p^*L\cdot\ell=0$ and $F\cdot\ell=-1$.  Every summand of $\mathcal V_\lambda|_\ell$ therefore has degree $\lambda/a>0$.

Thus $\mathcal V_\lambda$ restricts to an ample $\mathbb Q$-twisted vector bundle on every invariant curve of $\wt X$.  The criterion of Hering--Musta\c{t}\u{a}--Payne proves that $\mathcal V_\lambda$ is ample, and therefore so is $\rho^*A-\lambda G$.
\end{proof}

\subsection{Restriction to a fixed fiber}

For an integer $m$, set $D_m:=K_Y+mA$.

\begin{proposition}
\label{prop:restriction}
Let $x\in X$ be a torus-fixed point.  If $m\delta_x(A)>n$, then
\[
 H^1\bigl(Y,D_m\otimes\II_{Y_x}\bigr)=0.
\]
Consequently, the restriction map $H^0(Y,D_m)\to H^0(Y_x,D_m|_{Y_x})$ is surjective.
\end{proposition}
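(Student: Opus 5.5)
The plan is to pass to the blow-up $\rho\colon\wt Y\to Y$ along $Y_x$ and use Kawamata--Viehweg vanishing there.

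\emph{Step 1: comparison with the blow-up.} Since $Y_x$ is smooth of codimension $n$ in $Y$, we have
\[
K_{\wt Y}=\rho^*K_Y+(n-1)G .
\]
Since $Y_x$ is a smooth (hence normal) subvariety, $\rho_*\OO_{\wt Y}(-G)=\II_{Y_x}$ and $R^i\rho_*\OO_{\wt Y}(-G)=0$ for $i>0$. This is the standard computation on the blow-up of a smooth center: $G\simeq\PP(N_{Y_x/Y})$, and on the fibres $\PP^{n-1}$ one has $\OO(-G)|=\OO(1)$, which has no higher cohomology. When $n=1$, $\rho$ is the identity, $G=Y_x$, and these statements are trivial. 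The projection formula and the Leray spectral sequence then give
\[
H^1\bigl(Y,D_m\otimes\II_{Y_x}\bigr)\simeq H^1\bigl(\wt Y,\rho^*D_m-G\bigr).
\]

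\emph{Step 2: vanishing on $\wt Y$.} Rewrite the line bundle as
\[
\rho^*D_m-G=K_{\wt Y}+\bigl(m\rho^*A-nG\bigr).
\]
Since $m\delta_x(A)>n$, the rational number $\lambda=n/m$ satisfies $0<\lambda<\delta_x(A)$. By \Cref{lem:blowup-ample}, $\rho^*A-\lambda G$ is an ample $\mathbb Q$-divisor, so its multiple $m\rho^*A-nG$ is an ample integral divisor. Kodaira vanishing on the smooth projective variety $\wt Y$ (characteristic zero) gives $H^1(\wt Y,\rho^*D_m-G)=0$, and therefore $H^1(Y,D_m\otimes\II_{Y_x})=0$.

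\emph{Step 3: surjectivity.} Twist the exact sequence
\[
0\to\II_{Y_x}\to\OO_Y\to\OO_{Y_x}\to0
\]
by $D_m$ and take cohomology. The vanishing from Step 2 gives surjectivity of $H^0(Y,D_m)\to H^0(Y_x,D_m|_{Y_x})$.

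The only point that needs any care is the vanishing of the higher direct images of $\OO(-G)$, including the degenerate case $n=1$, and that point is standard.
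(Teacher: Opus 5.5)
Your proposal is correct and follows the paper's argument essentially step for step. Like the paper, you blow up $Y_x$, rewrite $\rho^*D_m-G=K_{\wt Y}+(m\rho^*A-nG)$, apply \Cref{lem:blowup-ample} with $\lambda=n/m$ together with Kodaira vanishing, and then descend via $\rho_*\OO_{\wt Y}(-G)=\II_{Y_x}$, $R^i\rho_*\OO_{\wt Y}(-G)=0$ and the restriction sequence.
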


\begin{proof}
The center $Y_x\subset Y$ has codimension $n$, so the canonical divisor of the blow-up is
\[
 K_{\wt Y}=\rho^*K_Y+(n-1)G.
\]
The assumption $m\delta_x(A)>n$ says precisely that $n/m$ lies strictly between $0$ and $\delta_x(A)$.  Applying \Cref{lem:blowup-ample} with $\lambda=n/m$ shows that the integral divisor $m\rho^*A-nG$ is ample.

This is exactly the positivity needed for Kodaira vanishing, because
\begin{align*}
 \rho^*D_m-G
 &=\rho^*(K_Y+mA)-G\\
 &=K_{\wt Y}+\bigl(m\rho^*A-nG\bigr).
\end{align*}
Hence
\[
 H^1\bigl(\wt Y,\rho^*D_m-G\bigr)=0.
\]

For the blow-up of a smooth center one has $\rho_*\OO_{\wt Y}(-G)=\II_{Y_x}$ and $R^i\rho_*\OO_{\wt Y}(-G)=0$ for $i>0$.  By the projection formula and Leray, the vanishing above is equivalent to
\[
 H^1\bigl(Y,D_m\otimes\II_{Y_x}\bigr)=0.
\]
The desired surjectivity now follows from the standard exact sequence
\[
 0\longrightarrow D_m\otimes\II_{Y_x}\longrightarrow D_m\longrightarrow D_m|_{Y_x}\longrightarrow0.
\]
\end{proof}

\subsection{The freeness theorem}

For the quotient convention, the canonical bundle formula is
\begin{equation}
\label{eq:canonical}
 K_Y=-r\,\OO_Y(1)+\pi^*(K_X+\det\EE).
\end{equation}
Equivalently,
\begin{equation}
\label{eq:Dm}
 D_m=(ma-r)\OO_Y(1)+\pi^*(K_X+\det\EE+mL).
\end{equation}

\begin{proof}[Proof of \Cref{thm:main}]
Let $x\in X$ be a torus-fixed point.  Since $\delta(A)\leq\delta_x(A)$, \Cref{prop:restriction} gives a surjection
\[
 H^0(Y,D_m)\longrightarrow H^0(Y_x,D_m|_{Y_x}).
\]
By \eqref{eq:Dm}, the restriction to the fiber is $D_m|_{Y_x}\simeq\OO_{\PP^{r-1}}(ma-r)$.  The inequality $ma\geq r$ therefore implies that $D_m|_{Y_x}$ is globally generated.  Since every section on the fiber lifts to a global section of $D_m$, the line bundle $D_m$ has no base point anywhere on the fiber $Y_x$.  The same conclusion holds for the fiber over every torus-fixed point of $X$.

It remains to pass from these special fibers to all of $Y$.  The line bundle $D_m$ admits a torus linearization, so its base locus $\Bs(D_m)$ is a closed torus-invariant subset of the projective variety $Y$.  If the base locus were nonempty, the closure of a torus orbit contained in it would contain a torus-fixed point.  Every fixed point of $Y$ lies over a fixed point of $X$, contradicting the fact that $D_m$ is generated along every fixed fiber.  Hence $\Bs(D_m)=\varnothing$.
\end{proof}

\begin{corollary}[Uniform bound]
\label{cor:uniform}
Under the hypotheses of \Cref{thm:main}, the line bundle $K_Y+mA$ is globally generated whenever an integer $m$ satisfies $m\geq n+1$ and $ma\geq r$.  Equivalently, it is globally generated for every
\[
 m\geq\max\left\{n+1,\left\lceil\frac{r}{a}\right\rceil\right\}.
\]
\end{corollary}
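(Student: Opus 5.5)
This follows directly from \Cref{thm:main}, so the plan is a short reduction. Assume $m\geq n+1$ and $ma\geq r$. The only hypothesis of \Cref{thm:main} still to check is $m\delta(A)>n$.

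By definition, $\delta(A)$ is a minimum of the integers $a b_{C,j}+L\cdot C$, and each of these equals $A\cdot s_{C,j}(C)$ by \eqref{eq:edge-degree}. Since $A$ is ample, each such intersection number is a positive integer. The minimum is taken over finitely many torus-fixed points, invariant curves and summands, so $\delta(A)$ is also a positive integer, that is, $\delta(A)\geq1$. It follows that
\[
m\delta(A)\geq m\geq n+1>n .
\]
\Cref{thm:main} then shows that $K_Y+mA$ is globally generated.

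For the reformulation, note that for an integer $m$ the inequality $ma\geq r$ is equivalent to $m\geq r/a$, and hence to $m\geq\lceil r/a\rceil$. Combining this with $m\geq n+1$ gives exactly $m\geq\max\{n+1,\lceil r/a\rceil\}$.

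None of these steps is a real obstacle. The one point to state explicitly is that $\delta(A)$ is an integer rather than merely a positive rational number: integrality is what turns $\delta(A)>0$ into $\delta(A)\geq1$, and it holds because each quantity in the minimum is an intersection number of a line bundle with a curve.
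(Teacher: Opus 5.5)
Your proposal is correct and follows essentially the same route as the paper. The paper likewise uses $\delta(A)\geq1$ to turn $m\geq n+1$ into $m\delta(A)>n$ and then applies \Cref{thm:main}. Your added justification of why $\delta(A)$ is an integer, and of the $\lceil r/a\rceil$ reformulation, is accurate.
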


\begin{proof}
Since $\delta(A)$ is a positive integer, $\delta(A)\geq1$.  Thus $m\geq n+1$ implies $m\delta(A)>n$, and \Cref{thm:main} applies.
\end{proof}

\begin{corollary}[Fujita freeness]
\label{cor:fujita}
Every projectivized toric vector bundle over a smooth projective toric variety satisfies Fujita's freeness conjecture.
\end{corollary}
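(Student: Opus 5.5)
The plan is to deduce Fujita freeness directly from \Cref{cor:uniform} by comparing the Fujita threshold with the threshold established there. Let $H$ be an arbitrary ample line bundle on $Y=\PP_X(\EE)$; we must show that $K_Y+mH$ is globally generated for every $m\geq\dim Y+1=n+r$. The first step is to bring $H$ into the form used throughout the paper. Since $\pi$ is a projective bundle, $\operatorname{Pic}Y\simeq\mathbb Z\,\OO_Y(1)\oplus\pi^*\operatorname{Pic}X$, so $H=\OO_Y(a)\otimes\pi^*L$ for some integer $a$ and some $L\in\operatorname{Pic}X$.

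Next we check that $a\geq1$. The restriction of $H$ to any fiber $Y_x\simeq\PP^{r-1}$ is $\OO_{\PP^{r-1}}(a)$. Because $r\geq2$, the fiber has positive dimension, and ampleness of $H$ therefore forces $a\geq1$. Hence $H$ satisfies the hypotheses of \Cref{thm:main} with $A=H$.

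We then verify the two numerical conditions of \Cref{cor:uniform}. Take $m\geq n+r$. Since $r\geq2$, we have $m\geq n+1$. Since $a\geq1$ and $n\geq1$, we also have $ma\geq m\geq n+r>r$. \Cref{cor:uniform} therefore gives that $K_Y+mH$ is globally generated. In fact the argument yields the stronger threshold $\max\{n+1,\lceil r/a\rceil\}$, which never exceeds $n+r$.

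For the case $n=0$ excluded from the running assumptions, $Y\simeq\PP^{r-1}$ and $H=\OO(a)$ with $a\geq1$. Then $K_Y+mH=\OO(ma-r)$, and since $m\geq r$ we get $ma-r\geq0$, so this bundle is globally generated.

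There is no real obstacle here. The only point needing care is justifying the decomposition of $\operatorname{Pic}Y$ and the positivity $a\geq1$, and both are standard facts about projective bundles.
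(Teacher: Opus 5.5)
Your argument for $n\geq1$ and $r\geq2$ is the paper's argument. In both, $m\geq n+r$ gives $m\geq n+1$ and $ma\geq m\geq r$, and the conclusion follows from \Cref{cor:uniform}. You also justify explicitly that every ample $H$ on $Y$ has the form $\OO_Y(a)\otimes\pi^*L$ with $a\geq1$, which the paper leaves implicit. Your treatment of $n=0$ is fine.

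What is missing is the rank-one case. The corollary is stated for \emph{every} projectivized toric vector bundle, and the paper's proof explicitly treats $r=1$, where $Y\simeq X$. Your argument does not cover it, because you use $r\geq2$ without stating it as an assumption. For $r=1$ the fiber $Y_x$ is a point, so $\operatorname{Pic}Y=\pi^*\operatorname{Pic}X$ has no independent $\OO_Y(1)$ summand, and ampleness does not force $a\geq1$. More to the point, \Cref{thm:main} and \Cref{cor:uniform} are stated only for $r\geq2$, so you cannot cite them there.

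The paper closes this case by invoking the toric case of Fujita freeness \cite{Mustata}. Alternatively, you could check that the blow-up proof of \Cref{thm:main} runs unchanged when $r=1$:
\begin{itemize}
\item write $H=\OO_Y(1)\otimes\pi^*(H-\EE)$, so that $a=1$;
\item the fixed-fiber Seshadri estimate becomes the usual toric ampleness criterion on $\Bl_xX$;
\item generation on the point fiber is automatic.
\end{itemize}
This gives global generation of $K_X+mH$ for $m\geq n+1$. Either way the fix is one sentence, but it has to be there for the statement as written.
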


\begin{proof}
If $n=0$, then $Y\simeq\PP^{r-1}$ and the assertion is elementary.  If $r=1$, then $Y\simeq X$ and the assertion is the toric case of Fujita's freeness conjecture \cite{Mustata}.  We may therefore assume $n\geq1$ and $r\geq2$.  We have $\dim Y=n+r-1$.  If $m\geq\dim Y+1=n+r$, then certainly $m\geq n+1$ and, since $a\geq1$, one also has $ma\geq m\geq r$.  The result follows from \Cref{cor:uniform}.
\end{proof}

\begin{remark}[Status and relation with previous work]
To the best of our knowledge, Fujita freeness has not previously been
established for arbitrary projectivized toric vector bundles.  Several
important special cases are known.  In particular, the rank-two case
follows from the work of Altmann--Ilten on complexity-one $T$-varieties
\cite{AltmannIlten}, while George--Manon prove Fujita-type results for
several classes of projectivized toric vector bundles \cite{GeorgeManon}.

Over $\mathbb C$, the case $a=1$ of \Cref{thm:main} is already a direct
consequence of the computation of Seshadri constants for toric vector
bundles by Hering--Musta\c{t}\u{a}--Payne and the adjoint jet-separation
theorem of Fulger--Murayama \cite[Proposition~3.2]{HMP} and \cite[Proposition~6.7]{FulgerMurayama};
the corresponding characteristic-zero statement follows by spreading
out and faithfully flat base change.  See also
\Cref{sec:vector-bundle-form}.  For arbitrary $a$, the natural object is
the formal twist $\EE\langle L/a\rangle$.  The blow-up argument above
may be viewed as an integral realization, in the toric setting, of the
same Seshadri-constant mechanism.  Its point is that it treats an
arbitrary polarization $\OO_Y(a)\otimes\pi^*L$ directly and yields
Fujita freeness for projectivizations of toric vector bundles of
arbitrary rank.
\end{remark}

\section{Proof of Theorem 2}

\subsection{The vector-bundle formulation}
\label{sec:vector-bundle-form}

Put $q=ma-r$.  When $q\geq0$, \eqref{eq:Dm} and the projective bundle formula give
\[
 \pi_*\OO_Y(D_m)\simeq\omega_X\otimes\det\EE\otimes\Sym^q\EE\otimes L^{\otimes m}.
\]
The proof of \Cref{thm:main} yields more than base-point-freeness of $D_m$.

\begin{proof}[Proof of \Cref{thm:vector-bundle}]
Since $q\geq0$, the projective bundle formula gives
$R^i\pi_*\OO_Y(q)=0$ for $i>0$.  Cohomology and base change therefore
yields a natural identification
\[
 (\pi_*\OO_Y(q))\otimes k(x)
 \simeq H^0\bigl(\PP(\EE_x),\OO_{\PP(\EE_x)}(q)\bigr).
\]
After tensoring with $(\omega_X\otimes\det\EE\otimes L^{\otimes m})_x$,
the target of the restriction map in \Cref{prop:restriction} identifies
with the fiber at $x$ of
$\omega_X\otimes\det\EE\otimes\Sym^q\EE\otimes L^{\otimes m}$.
Under this identification, the restriction map is precisely the
evaluation map of that vector bundle at $x$.  It is therefore surjective
at every torus-fixed point.  The non-generation locus is closed and
torus-invariant, so it is empty by the same fixed-point argument used in
the proof of \Cref{thm:main}.
\end{proof}

Taking $A=\OO_{\PP(\EE)}(1)$ gives a particularly clean statement.

\begin{corollary}
\label{cor:ample-E}
Let $\EE$ be an ample toric vector bundle of rank $r$ on a smooth projective toric $n$-fold $X$.  Let
\[
 \delta(\EE):=\min_{C,j} b_{C,j},
\]
where the minimum is taken over all invariant curves $C$ and all summands in the splitting of $\EE|_C$.  If $q\geq0$ and $(q+r)\delta(\EE)>n$, then $\omega_X\otimes\det\EE\otimes\Sym^q\EE$ is globally generated.  In particular, it is globally generated as soon as $q+r\geq n+1$.
\end{corollary}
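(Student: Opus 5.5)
The plan is to specialize \Cref{thm:vector-bundle} to the polarization $A=\OO_Y(1)$, i.e.\ $a=1$ and $L=\OO_X$. First I check that this $A$ is ample. By definition, $\EE$ is ample exactly when $\OO_{\PP(\EE)}(1)$ is ample on $Y=\PP_X(\EE)$, with the quotient convention used throughout. Hence $A=\OO_Y(1)\otimes\pi^*\OO_X$ satisfies the hypotheses of \Cref{thm:main} with $a=1$. For the moment I assume $r\geq2$ and $n\geq1$, as the earlier statements require; the degenerate cases are handled at the end.

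Next I identify the invariant. By \eqref{eq:edge-degree}, with $a=1$ and $L\cdot C=0$, one has $A\cdot s_{C,j}(C)=b_{C,j}$. Therefore $\delta_x(A)$ is the minimum of $b_{C,j}$ over invariant curves $C$ through $x$. Every invariant curve passes through some torus-fixed point, since it is a $\PP^1$ with two fixed points. Taking the minimum over $x\in X^T$ thus gives $\delta(A)=\delta(\EE)$. Ampleness of $\EE|_C$ also shows that each $b_{C,j}\geq1$, so $\delta(\EE)$ is a positive integer.

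Now set $m=q+r$. The condition $ma\geq r$ becomes $q\geq0$, and the condition $m\delta(A)>n$ becomes $(q+r)\delta(\EE)>n$. \Cref{thm:vector-bundle} then gives global generation of $\omega_X\otimes\det\EE\otimes\Sym^{q}\EE\otimes\OO_X$, which is the claimed bundle. For the final clause, $\delta(\EE)\geq1$, so $q+r\geq n+1$ gives $(q+r)\delta(\EE)\geq n+1>n$.

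The main point needing care is the edge cases excluded from the standing hypotheses. If $r=1$, then $\EE$ is an ample line bundle $M$ and the bundle in question is $\omega_X\otimes M^{q+1}$. I would handle this by rerunning the proof of \Cref{thm:main} verbatim with $Y=X$. Equivalently, I would invoke the toric Fujita result \cite{Mustata} when $q+1\geq n+1$. More precisely, \Cref{lem:blowup-ample} and \Cref{prop:restriction} never used $r\geq2$, so the argument goes through for all $r\geq1$. If $n=0$, the statement is trivial. I expect no genuine obstacle beyond confirming that the corollary's implicit hypotheses match those of \Cref{thm:vector-bundle}.
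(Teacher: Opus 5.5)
Your proposal is correct and is exactly the paper's proof: apply \Cref{thm:vector-bundle} with $a=1$, $L=\OO_X$, $m=q+r$, after checking (as you do) that $A=\OO_Y(1)$ is ample and that $\delta(A)=\delta(\EE)\geq1$. Your treatment of $r=1$ is a correct supplement that the paper leaves implicit. Note, though, that the refined bound $(q+1)\delta(\EE)>n$ comes from rerunning the blow-up argument, which indeed never uses $r\geq2$; citing \cite{Mustata} in the form you state only covers the clause $q+1\geq n+1$.
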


\begin{proof}
Apply \Cref{thm:vector-bundle} with $a=1$, $L=\OO_X$, and $m=q+r$.
\end{proof}

\begin{remark}[Comparison with Fulger--Murayama]
At a torus-fixed point $x$, Hering--Musta\c{t}\u{a}--Payne prove that $\varepsilon(\EE;x)=\delta_x(\EE)$ for a nef toric vector bundle \cite[Proposition~3.2]{HMP}.  Over $\mathbb C$, Fulger--Murayama prove that, for an ample vector bundle $V$ of rank $r$ on a smooth projective $n$-fold, the bundle $\omega_X\otimes\det V\otimes\Sym^qV$ is generated at $x$ whenever $\varepsilon(V;x)>n/(q+r)$ \cite[Proposition~6.7]{FulgerMurayama}.  Applying this at the torus-fixed points and then using torus invariance gives \Cref{cor:ample-E} directly over $\mathbb C$; the characteristic-zero statement follows by spreading out and faithfully flat base change.  More generally, when $a=1$ and $A=\OO_Y(1)\otimes\pi^*L$, one applies the same argument to the ample toric vector bundle $\EE\otimes L$.

For arbitrary $a$, the natural object is the formal twist $\EE\langle L/a\rangle$.  The proof of \Cref{thm:main} may be viewed as an integral blow-up realization of the same Seshadri-constant argument, without requiring a separate adjoint theorem for formal $\mathbb Q$-twists.
\end{remark}

\section{Sharpness}

The two numerical conditions in \Cref{cor:uniform} cannot be improved uniformly.

\begin{proposition}
\label{prop:sharp}
For every $n\geq1$, $r\geq2$, and $a\geq1$, there are $X$, $\EE$, and $A$ as above such that $K_Y+mA$ is globally generated if and only if $m\geq n+1$ and $ma\geq r$.
\end{proposition}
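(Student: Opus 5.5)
The plan is to take the simplest possible example: the trivial bundle on projective space. Given $n\geq1$, $r\geq2$ and $a\geq1$, I would set
\[
X=\PP^n,\qquad \EE=\OO_{\PP^n}^{\oplus r},\qquad L=\OO_{\PP^n}(1),\qquad A=\OO_Y(a)\otimes\pi^*L .
\]
Here $\EE$ carries the trivial torus linearization, so it is a toric vector bundle of rank $r$. Then $Y=\PP_X(\EE)\simeq\PP^n\times\PP^{r-1}$. Under this identification $\OO_Y(1)=\mathrm{pr}_2^*\OO_{\PP^{r-1}}(1)$, so $A\simeq\OO(1,a)$ in bidegree notation (first entry on $\PP^n$, second on $\PP^{r-1}$). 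This is ample because both degrees are positive.

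Next I would compute the adjoint bundle. Since $\det\EE$ is trivial and $K_X=\OO(-n-1)$, formula \eqref{eq:Dm} gives
\[
K_Y+mA\simeq\OO_{\PP^n\times\PP^{r-1}}\bigl(m-n-1,\;ma-r\bigr).
\]
As a cross-check, this agrees with $K_{\PP^n\times\PP^{r-1}}=\OO(-n-1,-r)$.

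It then remains to show that $\OO(\alpha,\beta)$ on $\PP^n\times\PP^{r-1}$ is globally generated if and only if $\alpha\geq0$ and $\beta\geq0$.

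\emph{If.} When $\alpha,\beta\geq0$, the bundle is the exterior tensor product of two globally generated line bundles, hence globally generated. Alternatively, this direction already follows from \Cref{cor:uniform}.

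\emph{Only if.} Suppose one of $\alpha,\beta$ is negative. By Künneth,
\[
H^0(\OO(\alpha,\beta))=H^0(\PP^n,\OO(\alpha))\otimes H^0(\PP^{r-1},\OO(\beta))=0.
\]
A line bundle with no nonzero sections on a nonempty variety is not globally generated. Equivalently, one can restrict to a fiber $\PP^n\times\{pt\}$ or $\{pt\}\times\PP^{r-1}$, where the bundle has negative degree.

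Combining the two directions, $K_Y+mA$ is globally generated exactly when $m\geq n+1$ and $ma\geq r$, which is the claim.

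The main thing to check carefully is the identification $\OO_Y(1)=\mathrm{pr}_2^*\OO(1)$ under Grothendieck's quotient convention for a trivial bundle. This is standard: $\PP(\OO^{\oplus r})=X\times\PP^{r-1}$ with the tautological quotient pulled back from the second factor. No genuine obstacle is expected. It is worth noting that in this example $\delta(A)=1$, since every $b_{C,j}=0$ and $L\cdot C=1$. So the example also shows that the condition $m\delta(A)>n$ in \Cref{thm:main} is sharp when $\delta(A)=1$.
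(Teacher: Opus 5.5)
Your proposal is correct and follows essentially the same route as the paper. The paper also takes $X=\PP^n$, $\EE=\OO_{\PP^n}^{\oplus r}$ and $A=\OO(1,a)$ on $\PP^n\times\PP^{r-1}$, computes $K_Y+mA\simeq\OO(m-n-1,ma-r)$, and uses that $\OO(u,v)$ is globally generated exactly when $u,v\geq0$.
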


\begin{proof}
Take $X=\PP^n$ and $\EE=\OO_{\PP^n}^{\oplus r}$.  Then $Y\simeq\PP^n\times\PP^{r-1}$.  Let
\[
 A=\operatorname{pr}_1^*\OO_{\PP^n}(1)\otimes\operatorname{pr}_2^*\OO_{\PP^{r-1}}(a),
\]
which is equivalently $A=\OO_Y(a)\otimes\pi^*\OO_{\PP^n}(1)$.  In this case
\[
 K_Y+mA\simeq\OO_{\PP^n\times\PP^{r-1}}(m-n-1,ma-r).
\]
A line bundle $\OO(u,v)$ on the product is globally generated if and only if $u,v\geq0$.  Hence the two displayed numerical conditions are both necessary and sufficient.
\end{proof}

The bound in \Cref{cor:ample-E} is likewise sharp.  Indeed, for $X=\PP^n$ and $\EE=\OO_{\PP^n}(1)^{\oplus r}$ one has
\[
 \omega_X\otimes\det\EE\otimes\Sym^q\EE\simeq\OO_{\PP^n}(q+r-n-1)^{\oplus\binom{q+r-1}{r-1}}.
\]
Thus global generation holds exactly when $q+r\geq n+1$.

\begin{remark}
The determinant factor in \Cref{thm:vector-bundle} is essential. An ample toric vector bundle need not be globally generated; explicit examples were constructed by Di~Rocco--Jabbusch--Smith \cite{DRJS}.  More strongly, N\o dland constructs, for every positive integer $k$, an ample rank-two toric vector bundle $\EE_k$ on a smooth complete toric surface $X$ such that $\Sym^k\EE_k$ is not globally generated \cite[Section~9]{Nodland}.  Since $\Sym^k\EE_k$ is a quotient of $\EE_k^{\otimes k}$, the tensor power $\EE_k^{\otimes k}$ is not globally generated either.

The toric surface $X$ used in N{\o}dland's construction is obtained from
$\PP^1\times\PP^1$ by two successive toric blow-ups, the second centered
at a torus-fixed point on the exceptional divisor of the first.  Its
invariant curves have self-intersection at least $-2$.  Hence $-K_X$ is
nef and, since $X$ is toric, globally generated.
It follows that neither $\omega_X\otimes\Sym^k\EE_k$ nor $\omega_X\otimes\EE_k^{\otimes k}$ is globally generated.  Indeed, global generation of either bundle, after tensoring with the globally generated line bundle $\omega_X^{-1}$, would imply global generation of $\Sym^k\EE_k$ or $\EE_k^{\otimes k}$, respectively.  Thus the natural tensor-power analogue of the adjoint statement fails even for ample rank-two toric vector bundles on smooth toric surfaces; it would imply the corresponding symmetric-power statement because $\Sym^k\EE_k$ is a quotient of $\EE_k^{\otimes k}$.
\end{remark}

\end{document}